\newtheorem{theorem}{Theorem}[section]
\newtheorem{lemma}[theorem]{Lemma}
\newtheorem{proposition}[theorem]{Proposition}
\newtheorem{corollary}[theorem]{Corollary}
\theoremstyle{definition}
\newtheorem{definition}[theorem]{Definition}
\newtheorem{rem}[theorem]{Remark}
\newtheorem{example}[theorem]{Example}
\numberwithin{equation}{section}
\newcommand{\bte}{\begin{theorem}\quad  }
\newcommand{\ete}{\end{theorem} }
\newcommand{\bpr}{\begin{proposition}\quad  }
\newcommand{\epr}{\end{proposition} }
\newcommand{\ble}{\begin{lemma}\quad }
\newcommand{\ele}{\end{lemma}}
\newcommand{\bco}{\begin{corollary}\quad }
\newcommand{\eco}{\end{corollary} }
\newcommand{\bex}{\begin{example}\quad \rm }
\newcommand{\eex}{\end{example} }
\newcommand{\bde}{\begin{defi}\quad \rm }
\newcommand{\ede}{\end{defi} }
\newcommand{\brm}{\begin{rem} \quad \rm}
\newcommand{\erm}{\end{rem} }
\newcommand{\bpf}{\begin{proof}[{\bf{Proof.\quad}}] \rm}
\newcommand{\epf}{ \end{proof}}
\newcommand{\bdm}{\begin{displaymath} }
\newcommand{\edm}{\end{displaymath} }
\newcommand{\be}{\begin{eqnarray*}}
\newcommand{\ee}{\end{eqnarray*}  }
\newcommand{\lb}{\label}
\newcommand\cupop{\mathop{\operator@font \cup}\nolimits}
\numberwithin{equation}{section}
\begin{document}
\author[Samira Hosseinzadeh]{Samira Hosseinzadeh Alikhalaji}
\address{Science and Research Branch, Islamic  Azad University, Tehran, Iran}
\email{samira.hosseinzadehalikhalaji@sriau.ac.ir}
\author{Mojtaba Sedaghatjoo$^{^*}$}
\address{Department of Mathematics, College of Sciences, Persian Gulf University, Bushehr, Iran.}
\email{sedaghat@pgu.ac.ir}
\author{Mohammad Roueentan}
\address{College of Engneering, Lamerd Higher Education Center, Lamerd, Iran.}
\email{rooeintan@lamerdhec.ac.ir}
\thanks{$^{^*}$Corresponding author}
\keywords{uniform semigroup, regular uniform semigroup, strongly right noetherian uniform semigroup.}
\subjclass[2020]{20M30, 20M17, 08B26}
\title[]{the structure of regular right uniform semigroups}
\begin{abstract}
In this paper we investigate right uniform notion on some classes of semigroups. The main objective of this paper is realizing the structure of regular right uniform semigroups which can be applied as a cornerstone of characterizing regular right subdirectly irreducible semigroups.
\end{abstract}

\maketitle
\section{INTRODUCTION AND PRELIMINARIES}
Taking inspiration from uniform modules, investigation on uniform acts over semigroups was initiated by Feller and Gantos in the category of {\bf Act}$_0-S$ (\cite{feller}). In a recent work on uniform acts \cite{MM}, an introductory account on uniform notion as a generalisation of (subdirectly) irreducible acts, in the category of {\bf Act}$-S$ is presented. The uniformness of a semigroup $S$ as a right $S$-act over itself is the main subject of this paper which shall be called a right uniform semigroup. The paper is organized in three sections. In the first section we present preliminaries and terminologies needed in the sequel. Section 2 contains general results on right uniform semigroups, mainely, we realize conditions under which right uniformness is transformed from a semigroup $S$ to $S^1$($S^0$) and vice versa. Moreover, we prove that the set of idempotents of a right uniform semigroup $S$ is a left zero or a right zero subsemigroup, and hence any right uniform semigroup is an $E$-semigroup. Section 3 is devoted to investigating right uniformness on some classes of semigroups, in particular, we realize the structure of regular right uniform semigroups which can be constructive in characterisation of regular right subdirectly irreducible semigroups. Besides we realize the structure of  right uniform semigroups which are subclasses of regular semigroups. Ultimately, we summarize all characterized classes of right uniform semigroups in a table.

Throughout this paper, $S$ will denote an arbitrary semigroup which is not a singleton. To every semigroup $S$ we can associate the monoid $S^1$ with identity element $1$ adjoined if necessary: $$S^1=\begin{cases} S & \text{if} ~S~ \text{has an identity element,} \\ S  \cup\{1\} & \text{otherwise}. \end{cases}$$
In a similar fashion to every semigroup $S$ we can associate the semigroup $S^0$ with the zero element $0$ adjoined if necessary. If $G$ is a (right, left) group, then $G^0$ will be called a (right, left) 0-group.

Recall that a right $S$-act $A_S$ (or $A$, if there is no danger of ambiguity) is a nonempty set together with a function $ \mu: A\times S \longrightarrow A$, called the action of $S$ on $A$ such that $a(st)=(as)t$ for each  $a\in A$ and  $s,t\in S$ ( denoting $\mu(a,s)$ by $\,as$). Hereby, any semigroup $S$ can be considered as a right $S$-act over itself with the natural right action, denoted by $S_S$. For two acts $A$ and $B$ over a semigroup $S$, a mapping $f: A\longrightarrow B$ is called a homomorphism of right $S$-acts or just an $S$-homomorphism if $f(as)=f(a)s$ for all $a\in A,\, s\in S$.  An element $\theta$ of an act  $A$ is said to be a zero element if $\theta s=\theta$ for all $s\in S$. The set of zero elements of an act $A$ is denoted by $Z(A)$. Moreover, the one-element act, denoted by $\Theta=\{\theta\}$, is called the zero act. An equivalence relation  $\rho$ on a right $S$-act $A$ is called a congruence on $A$ if  $a~\rho ~a'$ implies $(as)~ \rho ~(a's)$ for every $a,a' \in A, s \in S$. A congruence on $A_S=S_S$ is called a right congruence. For an act $A$ the diagonal relation $\{(a,a)\,|\,a\in A\}$ on $A$ is a congruence on $A$ denoted by $\Delta_A$. 


Also if $B$ is a subact of $A$, then the congruence $(B\times B)\cup \Delta_A$  on $A$ is denoted by $\rho_B$ and is called the Rees congruence by the subact $B$.  A subact $B$ of an act $A$ is called large in $A$ (or $A$ is called an essential extension of $B$) and is denoted by $B \subseteq' A$, if any $S$-homomorphism $g: A \longrightarrow C$ such that $g|_B$ is a monomorphism is itself a monomorphism. One may routinely observe that a subact $B$ of an act $A$ is large in $A$ if and only if, for every (principal) nondiagonal congruence $\rho \in$ Con$(A)$, $\rho_B \cap \rho \neq \Delta_A$. For a thorough account on the preliminaries, the reader is referred to \cite{kilp,MM}.

\section{Uniform semigroups}

In this section we present new observations on right uniform semigroups which are needed in the sequel.
\begin{definition} For a semigroup $S$, a right $S$-act $A$ is called $uniform$ if every nonzero subact is large in $A$. Also a semigroup $S$ is called right (left) uniform if the right (left) $S$-act $S_S$ ($_SS$) is uniform.
\end{definition}
To simplify by a uniform semigroup, we mean a right uniform semigroup. In what follows we state two required results from \cite{MM} which are employed in the next arguments.
\bco \lb{co5}
\cite[Proposition 2.5]{MM} Let $A$ be a uniform act over a semigroup $S$. Then $|Z(A)|\leq 2$.
\eco
\bco \lb{co1}
If $S$ is a uniform semigroup and $xy=y$ for $x,y\in S$, then $x$ is a left identity or $y$ is a left zero.
\eco
Note that the above condition is not sufficient for a semigroup $S$ to be uniform, for instance, any left zero semigroup with more than two elements satisfies the condition while it is not uniform. In the next proposition we prove that for commutative chain semigroups (ideals ordered under inclusion form a chain) the condition is sufficient.
 \begin{proposition} \label{pr2}
Suppose that $S$ is a commutative chain semigroup. Then $S$ is uniform if and only if for $x,y\in S$, $xy=y$ implies that $x$ is the identity element or $y$ is the zero element.
\end{proposition}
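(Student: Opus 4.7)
The forward implication follows immediately from Corollary~\ref{co1}: from $xy=y$ in the uniform semigroup $S$ we conclude that $x$ is a left identity or $y$ is a left zero, and commutativity upgrades either one to a (necessarily unique) two-sided identity or zero of $S$, giving exactly the stated conclusion.

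For the converse, assume the condition and let $B\subseteq S$ be a subact with $|B|\ge 2$; I must show $B$ is large. Fix a nondiagonal congruence $\rho$ on $S_S$ and choose distinct $x,y$ with $x\,\rho\,y$. The chain hypothesis implies that $xS^1$ and $yS^1$ are comparable, so after possibly swapping $x$ and $y$ we may assume $yS^1\subseteq xS^1$; since $y\ne x$, this gives $y=xs$ for some $s\in S$. If $x\in B$, then $y=xs\in BS\subseteq B$ (using that $B$ is a right ideal), and $(x,y)$ is a nondiagonal pair of $\rho$ inside $B\times B$. Otherwise $x\notin B$, and comparing the ideals $B$ and $xS^1$ via the chain condition forces $B\subseteq xS^1\setminus\{x\}=xS$; hence every $b\in B$ can be written $b=xr$ for some $r\in S$. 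Closing $\rho$ under the right action by $r$ yields $(xr,\,xsr)=(b,\,bs)\in\rho$ (after swapping with commutativity), and $bs\in BS\subseteq B$.

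If $bs\ne b$ for some $b\in B$ we are finished. Otherwise $bs=b$ for every $b\in B$, and the hypothesis forces, for each such $b$, either $s$ to be the identity of $S$, which would give $y=xs=x$ and contradict $y\ne x$, or $b$ to be a zero of $S$. The latter would mean that every element of $B$ is a zero of $S$; but a commutative chain semigroup has at most one zero, since two distinct zeros $z_1,z_2$ would yield two $1$-element ideals $\{z_1\}$, $\{z_2\}$ that are incomparable, violating the chain hypothesis. Since $|B|\ge 2$ this is impossible, completing the proof. The delicate step is the second case, where the chain condition must be used twice: first to place $B$ inside $xS$ so that closure of $\rho$ produces pairs actually living in $B$, and second—via the uniqueness of zero in chain semigroups—to rule out the degenerate sub-case in which $B$ would collapse onto a set of zero elements.
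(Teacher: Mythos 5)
Your proof is correct, and the converse takes a genuinely different route from the paper's. The paper first invokes a reduction lemma from \cite{MM} to the effect that it suffices to show every nonzero \emph{principal} ideal $xS$ is large, and then verifies largeness through the homomorphism definition: given $f:S\to A$ with $f|_{xS}$ injective and $f(z)=f(y)$, it uses the chain condition to write $z=yt$, compares $yS^1$ with $xS^1$, pushes the equality down into $xS$ by multiplying by $h$ with $x=yh$, and applies the hypothesis to conclude $t$ acts as the identity. You instead work with an arbitrary subact $B$ and use the congruence criterion for largeness stated in the preliminaries ($\rho_B\cap\rho\neq\Delta_S$ for every nondiagonal congruence $\rho$): the chain condition first locates a nondiagonal pair $(x,xs)\in\rho$ with comparable principal ideals, and then (when $x\notin B$) forces $B\subseteq xS$, so that translating the pair by $r$ with $b=xr$ lands it inside $B$; the hypothesis then rules out the degenerate case $bs=b$ for all $b\in B$, since $B$ cannot consist entirely of zeros. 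Your version is more self-contained (it avoids the external reduction lemma and the slightly delicate bookkeeping about whether $x$ itself lies in $xS$), at the cost of invoking comparability of the non-principal ideal $B$ with $xS^1$ — which is legitimate, since a chain condition on principal ideals already implies it for all ideals. One presentational nit: you should note explicitly that in a commutative semigroup the nonzero subacts are exactly those with at least two elements (a singleton subact consists of a zero, and there is at most one zero), so that your standing assumption $|B|\ge 2$ really does cover the definition of uniformity; you prove the "at most one zero" fact only at the very end and for a different purpose.
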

\begin{proof}
\textbf{Necessity} This is a straightforward consequence of Corollary \ref{co1}.

\textbf{Sufficiency} Since $S$ is commutative, $S_S$ has at most one zero element and regarding \cite[Lemma 2.18]{MM}, we show that every nonzero principal ideal of $S$ is large. Let $f:S\longrightarrow A$ be a homomorphism and for a nonzero element $x\in S, f_{|xS}$ is a monomorphism. If $z,y \in S$ and $f(z)=f(y)$ , without loss of generality suppose that $z=yt$ for some $t\in S$ and so $f(yt)=f(y)$. If $yS^1\subseteq xS^1$, since $y,z\in xS$, $z=y$. Otherwise, $xS^1\subseteq yS^1$ and for some $h\in S, x=yh$. Consequently $f(yt)h=f(y)h$ and using commutativity of $S, f(xt)=f(yth)=f(yh)=f(x)$. Accordingly $xt=x$ and since $x\neq 0,$ by Corollary \ref{co1}, $t=1$. Therefore $z=yt=y$ as desired.
\end{proof}
Our next aim is to prove that uniformness is transformed from $S^1(S^0)$ to $S$. We should mention that for a semigroup $S$, any right $S$-act or any $S$-homomorphism can be naturally considered as a $S^{1}$-act or $S^1$-homomorphism and vice versa. This observation is applied in the next two results implicitly.
\ble \label{pr1}
For a semigroup $S$, if ${S^{1} (S^{0} )}$ is uniform, then $S$ is uniform.
\ele
\begin{proof}
If $S^{1}$ is uniform, then $S$ is a large right ideal of $S^{1}$ and by \cite[Lemma 2.11]{MM}, $S$ is uniform as a $S^1$-act. Note that any nonzero right ideal of $S$ is large in $S$ as a right $S^1$-act and consequently as a ritht $S$-act. Therefore $S$ is right uniform.

Now suppose that $S$ is a semigroup with no zero element and $S^{0}$ is uniform. Let $f:S\longrightarrow A$ be a $S$-homomorphism such that $f_{|I}:I \longrightarrow A$ is a monomorphism where $I$ is a right ideal of $S$. Clearly, $A\sqcup \Theta$ is a $S^0$-act by $a0=\theta 0=\theta$ for any $a\in A$, and the mapping $f ^{0}:{S^{0}\longrightarrow {A\sqcup \Theta}}$ defined by: \begin{equation*} f^0(s)=
\begin{cases}
f(s)\ ; &\text{$s\in S$}\\
\theta\ ; &\text{s=0}
\end{cases}
\end{equation*}
is a $S^{0}$-homomorphism for which $f^{0}_{|I^{0}}$ is an $S^{0}$-monomorphism. The uniformness of $S^0$ implies that $f^0$ is a $S^0$-monomorphism and consequently $f$ is a $S$-monomorphism.
\end{proof}
The next example shows that for a semigroup $S$, uniformness is not transformed from $S$ to $S^1$ and $S^0$ generally.
\bex \lb{ex1} Let $S$ be a left zero semigroup with two elements. Clearly $S$ is uniform but $S^0$ has three left zeros and hence is not uniform by Corollary \ref{co5}. Now, let $S=\{e_1,e_2\}$ be a right zero semigroup. Clearly $S$ is uniform. Take the homomorphism $f:S^1 \longrightarrow S^1$ given by $1\mapsto e_1$ which is not a monomorphism. Since $S$ is a nonzero right ideal of $S^1$ and $f_{|S}$ is a monomorphism, $S$ is not large in $S^1$ and hence $S^1$ is not uniform.
\eex
Regarding the foregoing example our next aim is investigating conditions under which uniformness of $S$ implies uniformness of $S^1$ or $S^0$.
\bte \lb{th1}
Let $S$ be a semigroup with no identity element.  $S^{1}$ is uniform if and only if  $S$ is uniform and has no left identity element.
\ete
\begin{proof}
\textbf{Necessity} By Lemma \ref{pr1}, $S$ is uniform and so we need only to show that $S$ has no left identity. Suppose by way of contradiction that $e$ is a left identity of $S$. It can be routinely checked that $\{(1,e),(e,1)\}\cup \Delta_{S^1}$ is the principal right congruence $\rho(1,e)$ on $S^1$. Now $\rho(1,e)\cap \rho_{S}=\Delta _{S^1}$ which proves that $S$ is not a large right ideal of $S^1$, a contradiction.

\textbf{Sufficiency} Suppose that $f:S^{1}\longrightarrow A$ is an $S^{1}$-homomorphism for an $S^{1}$-act $A$ and for a nonzero and proper right ideal $I$ of $S^{1}, f_{|I}$ is a monomorphism. Then the mapping $f_{|S}:S \longrightarrow A$ can be considered as an $S$-homomorphism and since $I$ is also a nonzero right ideal of $S$, uniformness of $S$ implies that  $f_{|S}$ is a monomorphism. If $ f(1)=f(s)$, for some $s\in S$, then for any $t\in S, f(t)=f(1)t=f(s)t=f(st)$ that implies that $st=t$ for any $t\in S$ and so $s$ is a left identity of $S$, a contradiction. Therefore $f$ is a monomorphism and we are done.
\end{proof}
The next theorem is a counterpart version of the above theorem for $S^0$.

\bte \label{th2}
Let $S$ be a semigroup with no zero element. Then $S^{0}$ is uniform if and only if $S$ is uniform and has no left zero element.
\ete
\begin{proof}
\textbf{Necessity} By Lemma \ref{pr1}, $S$ is uniform and we just need to prove that $S$ has no left zero. By way of contradiction, suppose that $\theta$ is a left zero element of $S$. The mapping $f:S^0\longrightarrow S^0$  defined by
  \begin{equation*} f(s)=
\begin{cases}
\theta\  &\text{if~} s\in S\\
0\  &\text{if~} s=0,
\end{cases}
\end{equation*}
is an $S^0$-homomorphism. Since $I=\lbrace {\theta,0 }\rbrace $ is a right ideal of $S^0$ and $f _{|I}$ is a $S^0$-monomorphism, uniformness of $S^0$ implies that $f$ is a monomorphism and consequently, $S$ should be a singleton, a contradiction.

\textbf{Sufficiency} Suppose that $f:S^{0}\rightarrow A$ is a $S^{0}$-homomorphism for a $S^{0}$-act $A$ and for a nonzero right ideal $I$ of $S^{0}, f_{|I}$ is a monomorphism. Therefore $A$ can be considered naturally as a right $S$-act and $f_{|S}$ can be considered a $S$-homomorphism. Since $S$ has no zero element, $I\setminus {\lbrace 0\rbrace}$ is a nonzero right ideal of $S$ and $f_{|I\setminus{\lbrace 0\rbrace}}$ is an $S$-monomorphism. The uniformness of $S$ implies that $f_{|S}$ is also a $S$-monomorphism. Now, if for an element $z\in S$, $f(z)=f(0)$ then for any $s\in S,f(zs)=f(0s)= f(0)=f(z)$ and so $zs=z$ for any $s\in S$. Thus $z$ is a left zero element of $S$, a contradiction.  Therefore, $f:S^{0}\rightarrow A$ is a monomorphism.
\end{proof}
In the next proposition the structure of the set of idempotent elements $E(S)$, for any uniform semigroup $S$ is realized.

\bpr \lb{pr4}Let $S$ be a uniform semigroup such that $E(S)\neq \emptyset$. Then the structure of $E(S)$ is realized as follows:
\begin{enumerate} [{\rm i)}]
\item $E(S)=L$ or $L^1$ where $L$ is a two elements left zero semigroup,
\item $E(S)=R$ or $R^0$ where $R$ is a right zero semigroup.
\end{enumerate}
\epr
\bpf Due to the number of left zero elements in a uniform semigroup (Corollary \ref{co5}) two cases may occur.

{\bf Case 1:} $S$ has two left zero elements. By virtue of \cite[Coeollary 2.16]{MM} and \cite[Theorem 2.9]{Ran}, $E(S)=L$ or $L^1$ where $L$ consists of two left zero elements.

{\bf Case 2:} $S$ has one left zero element which is the zero element of $S$ or $S$ has no zero element. Regarding Corollary \ref{co1} any nonzero idempotent is a left identity element. Since the set of left identity elements in a semigroup forms a right zero subsemigroup, the set of nonzero idempotents form a right zero subsemigroup of $S$. Therefore $E(S)=R^0$ or $R$ where $R$ is a right zero semigroup.

\epf
Following \cite{almeida}, a semigroup $S$ is called an $E$-semigroup if $E(S)$ (the set of idempotent elements) is a subsemigroup. The next corollary is an immediate result of the above proposition.
\bco \lb{co9}
Any uniform semigroup is an $E$-semigroup.
\eco
\section{Uniform notion on some classes of semigroups}
In this section we investigate uniform notion on some classes namely, left simple, left 0-simple, regular, strongly right noetherian, completely simple and completely 0-simple semigroups.

It is clear that any right simple semigroup is uniform. In the next result we prove that any left simple uniform semigroup with more than two elements is a group.
\bco \lb{co8}
Let $S$ be a left simple semigroup. Then $S$ is uniform if and only if $S$ is a left zero semigroup with two elements or $S$ is a group.
\eco

\begin{proof}
The sufficiency part is clear and we just need to prove the necessity. Note that in any left simple semigroup, any left identity is an identity. Two cases may occur.
\\ {\bf Case 1:} $S$ contains a left zero element namely $z$. Then $Sz=S$ implies that $S$ is a left zero semigroup and by Corollary \ref{co5}, $|S|= 2$.\\ {\bf Case 2:} $S$ has no left zero element. For $x\in S$, the equality $Sx=S$ implies that $tx=x$ for some $t\in S$. By assumption and by Corollary \ref{co1}, $t$ is a left identity which is the identity element of $S$. Then $S$ is a left simple semigroup with an identity element and hence is a group.
\end{proof}
We recall from \cite{cliff1} that a semigroup $S$ with a zero element $0$ is called {\it left(right) 0-simple} if $S^2\neq 0$, and $0$ is the only proper left(right) ideal of $S$. In the next corollary we prove that left 0-simple uniform semigroups are exactly 0-groups.
\bco \lb{co7}
Let $S$ be a left  0-simple semigroup. Then $S$ is uniform if and only if $S$ is a 0-group.
\eco
\begin{proof}
Since the sufficiency part is clear, we prove the necessity part. Note that for any element $y \in S, Sy=S$ implies that $xy=y$ for some $x\in S$. Thus if $y$ is nonzero, since $S$ contains a zero element, $y$ is not a left zero and consequently by Corollary \ref{co1}, $x$ is a left identity.  Since $S=Sx$, for any nonzero element  $z\in S, z=tx$, for some $t\in S$. Thus $zx=tx^{2}=tx=z$ and so $x$ is also a right identity. Hence $S$ has an identity element and by \cite[Proposition 3.13]{MM}, $S$ is the disjoint union of its maximum subgroup $G$ and a two sided ideal $I$. Since $S$ is left 0-simple, $I=\{0\}$ and we are done.
\end{proof}

Following \cite{PG,MM}, a semigroup $S$ is called right(left) irreducible if the diagonal relation is meet irreducible in the class of right(left) congruences on $S$. Then  regarding the arguments at the end of section 1, we have the implications

\centerline {right uniform $\Longrightarrow$  right irreducible $\Longrightarrow$ right subdirectly irreducible }
for semigroups.

 In what follows we are going to characterize uniform semigroups with the ascending chain condition on right congruences which yields another approach on characterizing (subdirectly) irreducible finitely generated commutative semigroups investigated in \cite{PG} and leads to characterizing some other classes of semigroups, for instance finite irreducible semigroups. First we need to present some ingredients.

 Recall from \cite{MM} that a semigroup $S$ is called strongly right noetherian if it satisfies the ascending chain condition for right congruences. Then by \cite[R$\acute{\text{e}}$dei's Theorem]{cliff2}, finitely generated free commutative semigroups and consequently finitely generated commutative semigroup as quotients of such semigroups are strongly (right) noetherian. Also recall from \cite{Ch} that for an element $a$ of a right $S$-act $A$, the right congruence annihilator of $a$ is defined by ann$(a):=\lbrace(s,t)\in S\times S|~ as=at\rbrace=$ ker$\lambda_a$ where $\lambda_a:S_S \longrightarrow A$ is defined by $\lambda_{a}(s)=as$ for every $s\in S$. For any semigroup $S$ an element $s\in S$ shall be called {\it left nilpotent} if $s^n$ is a left zero element for some natural $n$. It is clear that a left and right nilpotent element is nilpotent. A semigroup $S$ is called {\it left nil} if all elements of $S$ are left nilpotent. Besides, we recall from \cite{PG} that a subelementary semigroup is a commutative semigroup $S$ which is the disjoint union $S=N\cup C$ of an ideal $N$ which is a nilsemigroup, and a subsemigroup $C(\neq \emptyset)$ every element of which is cancellative in $S$. Accordingly, we call a semigroup $S$ left subelementary, if it is the disjoint union $S=L\cup C$ of a left ideal $L$ which is a left nil semigroup, and a subsemigroup $C(\neq \emptyset)$ every element of which is left cancellable in $S$.

In \cite[Proposition 2.2]{PG}, it is proved that any element in a finitely generated commutative irreducible semigroup is cancellative or nilpotent, consequently, from a structural point of view, any finitely generated commutative irreducible semigroup is cancellative, nil or subelementary. So as a generalization of this result, in the next proposition we realize the structure of a strongly right noetherian uniform semigroup. Note that the proof is the same for \cite[Proposition 2.2]{PG}, but for more clarification we present the proof.
 \begin{proposition} \label{pr2.6}
 Suppose that $S$ is a strongly right noetherian uniform semigroup. Then $S$ is left cancellative, left nil or left subelementary.
   \end{proposition}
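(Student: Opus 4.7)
The plan is to establish that every element of $S$ is either left cancellable or left nilpotent, after which the decomposition claim falls out by routine bookkeeping.

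Fix $s\in S$ and consider the ascending chain of right congruences
\[
\text{ann}(s)\subseteq \text{ann}(s^2)\subseteq \text{ann}(s^3)\subseteq\cdots,
\]
where each inclusion follows because $s^k u=s^k v$ implies $s^{k+1}u=s^{k+1}v$. By the noetherian hypothesis this chain stabilizes at some $n\ge 1$, and in particular $\text{ann}(s^n)=\text{ann}(s^{2n})$. I would then restrict the $S$-homomorphism $\lambda_{s^n}\colon S\to S$, $u\mapsto s^n u$, to the subact $s^n S$; this restriction is injective, since $s^n(s^n u)=s^n(s^n v)$ gives $(u,v)\in\text{ann}(s^{2n})=\text{ann}(s^n)$, hence $s^n u=s^n v$. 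Now invoke uniformity of $S_S$: if $s^n S$ is a nonzero subact then it is large in $S$, so injectivity of $\lambda_{s^n}$ on $s^n S$ forces $\lambda_{s^n}$ itself to be injective. This yields $\text{ann}(s^n)=\Delta_S$, and because $\text{ann}(s)\subseteq\text{ann}(s^n)$, the element $s$ is left cancellable. Otherwise every element of $s^n S$ is a zero element of $S_S$ (i.e.\ a left zero of $S$); in particular $s^{n+1}=s^n\cdot s\in s^n S$ is a left zero, so $s$ is left nilpotent.

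Let $L$ be the set of left nilpotent elements and $C$ the set of left cancellable elements; the previous paragraph shows $S=L\cup C$, and $L\cap C=\emptyset$ because $|S|\ge 2$ (an element in both would allow us to cancel $x$ repeatedly in $x^n u=x^n=x^n v$ and conclude $u=v$ for all $u,v\in S$). I would next verify that $L$ is a left ideal: given $x\in L$ and $y\in S$, if $yx\in C$ then $xu=xv$ would give $(yx)u=(yx)v$, hence $u=v$, contradicting $x\in L$. Thus $L$ absorbs all left multiplications, in particular $L$ is closed under multiplication, and each witness $x^n$ remains a left zero inside $L$, so $L$ is a left nil subsemigroup. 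Meanwhile, products of left cancellable elements are left cancellable, so $C$ is a subsemigroup.

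The proof concludes by trichotomy: $L=\emptyset$ yields $S=C$ left cancellative, $C=\emptyset$ yields $S=L$ left nil, and otherwise $S=L\cup C$ exhibits $S$ as left subelementary. The main technical obstacle is the dichotomy step --- turning the noetherian stabilization of the annihilator chain into the exclusive alternative ``$s$ left cancellable or $s^{n+1}$ a left zero'' by playing injectivity of $\lambda_{s^n}|_{s^nS}$ against largeness of $s^n S$.
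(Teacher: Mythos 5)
Your proof is correct and follows essentially the same route as the paper: stabilize the chain $\mathrm{ann}(s)\subseteq\mathrm{ann}(s^2)\subseteq\cdots$, then play uniformity of $S_S$ against the subact $s^nS$ to get the dichotomy ``left cancellable or left nilpotent,'' and finally check that the left nilpotents form a left ideal. The only cosmetic differences are that you invoke largeness via the homomorphism $\lambda_{s^n}$ rather than the congruence-intersection criterion $\rho_I\cap\mathrm{ann}(s^n)=\Delta_S$, and you prove the left-ideal property by the contrapositive (if $yx$ is left cancellable then so is $x$) instead of exhibiting the explicit witness $(yx)x^{n-1}=(yx)x^{n}$; both are sound.
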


 \begin{proof}
For an element $c\in S$ we have the ascending chain of right congruences $\text{ann}(c)\subseteq\text{ann}(c^2) \subseteq \cdots$ . Our assumption necessitates that for some $n\in \mathbb{N}$, $\text{ann}(c^n)=\text{ann}(c^{n+1})=\cdots=\text{ann}(c^{2n})$. For the right ideal $I=c^n S$ we show that $\rho_I \cap \text{ann}(c^n)=\Delta_S$. Let $(x,y)\in \rho_I \cap\text{ann}(c^n) $ and $x\neq y$. Then $x,y\in I $ and $c^n x=c^n y$. Hence for some $u,v \in S, x=c^n u$ and $y=c^n v$ which implies that $c^{2n} u=c^n x=c^n y=c^{2n} v$. Therefor $(u,v)\in \text{ann}(c^{2n})=\text{ann}(c^n)$ and consequently $ x=c^n u=c^n v=y$, a contradiction. Now by uniformness of $S$, ann$(c^n)=\Delta_S$ or $I$ is a zero subact of $S_S$. In the first case $\text{ann}(c) \subseteq$ $\text{ann}(c^n)$ implies that $\text{ann}(c)=\Delta_S$ or equivalently $c$ is left cancellable. In the second case $c^{n+1}$ is a left zero element or equivalently $c$ is left nilpotent. To complete the proof we need to prove that the set of left nil elements in $S$ is a left ideal. Suppose that $c\in S$ is a left nilpotent element. If $c$ is a left zero element then $sc$ is a left zero element for any $s\in S$ and hence $sc$ is left nilpotent. Otherwise there is a natural $n>1$ such that $c^n$ is a left zero element but $c^{n-1}$ is not. Now, for an arbitrary element $s\in S$, $(sc)c^{n-1}=(sc)c^n$ which shows that $sc$ is not left cancellable and hence is  left nilpotent.
\end{proof}

It should be mentioned that in \cite{MM} it is proved that any strongly right noetherian uniform semigroup with no left zero element is left cancellative, which the above proposition generalizes this result.  As a result of the above proposition and \cite[Corollary 3.11]{MM}, the next corollary is deduced.

\bco \lb{co2}
 Any finite uniform semigroup is right group, left nil or left subelementary.
 \eco
 The rest of paper is allocated to characterizing regular right uniform semigroups. In \cite[Theorem 3.17]{MM} the structure of regular uniform monoids is identified. In the next proposition, regular uniform semigroups without left identity elements are characterized.
 \bpr \label{pr3}
 Suppose that $S$ is a regular semigroup which has no left identity. Then $S$ is uniform if and only if $S=\lbrace {\theta_1,\theta_2}\rbrace$ where $\theta_1,\theta_2$ are left zero elements.
 \epr
\begin{proof}
The sufficiency part is clear.

\textbf{Necessity} By virtue of Theorem \ref{th1}, $S^1$ is a regular uniform monoid and hence by \cite[Theorem 3.17]{MM} has one of the following structures:
\begin{enumerate} [{\rm i)}]
\item $S^1$ is  a group,
\item $S^1=G^0,$ where $G$ is a group,
\item $S^1=G\sqcup \lbrace{\theta_1, \theta_2}\rbrace$ where $G$ is group, ${\theta_1, \theta_2}$ are left zero elements and $s\theta_i=\theta_j$ for all $s\in G\setminus \lbrace{1}\rbrace ,1\leq i\neq j\leq 2$.
\end{enumerate}
Since in all cases $S$ is a proper right ideal of $S^1$, $S^1$ is not a group and hence the first case does not occur. In other cases, as the identity element of $S^1$ is the identity element of $G$, if $x\in S \cap G$ then $S$ contains the identity element, a contradiction. Since $|S| > 1$, $S=\{\theta_1,\theta_2\}$ and we are done.
\end{proof}
As a result of the above proposition we conclude that almost all regular uniform semigroups have left identities.
\bco \label{co3}
Any regular uniform semigroup is a left zero semigroup with two elements or contains a left identity.
\eco

In the next proposition we characterize regular uniform semigroups without identity element but possessing  a left identity.
\bpr \lb{pr2} Let $S$ be a regular semigroup with a left identity $e$ which is not a right identity. $S$ is uniform if and only if it is a right group or a right 0-group.
\epr

\bpf The sufficiency part is clear.

{\bf Necessity:} First we prove that $S$ has more than one left identity and as a result, $S$ has at most one left zero element.

Let $S$ be a regular uniform semigroup with no identity element and let $e$ be a left identity element. If $e$ is the only left identity in $S$, regularity of $S$ implies that for any non left zero element $s\in S$, $s=sf$ for an idempotent $f$. Since $s$ is not a left zero element, $f$ is not a left zero and regarding Corollary \ref{co1}, $e=f$. Thus $e$ is a right identity and consequently it is the identity element, a contradiction. So there is a left identity namely $f\neq e$ in $S$. Therefore, $\rho(e,f)=\{(e,f),(f,e)\}\cup \Delta_S$  is a nondiagonal right congruence on $S$. If $S$ has two left zeros namely $\theta_1,\theta_2$, then $\rho(e,f)\cap \rho_{\{\theta_1,\theta_2\}}=\Delta_S$ which is a contradiction. Thus $S$ has at most one left zero element. Now the uniformness of $S$ implies that $e,f\in sS$  for any nonzero element $s\in S$. Substituting $e,f$ by any pair of distinct nonzero idempotents, we conclude that $E(S)\subseteq sS$ for any nonzero element $s\in S$. Since $S$ is regular, $S=E(S)S$ and hence for any nonzero element $s\in S,~S=E(S)S\subseteq sS$. Therefore, if $S$ has no zero element it is a right simple semigroup containing an idempotent and hence $S$ is a right group by \cite[Theorem 1.27]{cliff1}. Otherwise, if $S$ has the zero element 0, and $ab=0$, for nonzero elements $a,b\in S$, then $I=\{s\in S \,|\,as=0\}$ is a nonzero right ideal of $S$ and hence $I=S$. Therefore $aS=0$. Now, since $S$ is regular, $a\in aS=0$, a contradiction. Therefore, $S\backslash \{0\}$ is a regular right simple semigroup and hence $S$ is a right 0-group.
\epf
As a result of \cite[Theorem 3.17]{MM}, Propositions \ref{pr3}, \ref{pr2} and corollary \ref{co3} we can characterize regular uniform semigroups.
\bte \lb{th3} Let $S$ be a regular semigroup. $S$ is uniform if and only if $S$ has one of the following structures:
\begin{enumerate} [{\rm i)}]
\item $S=G$ or $G^0$, where $G$ is a group,
\item $S=G\sqcup \lbrace{\theta_1, \theta_2}\rbrace$ where $G$ is a group, ${\theta_1, \theta_2}$ are left zero elements and $g\theta_i=\theta_j$ for all $g\in G\setminus \lbrace{1}\rbrace ,1\leq i\neq j\leq 2$.
\item $S=\lbrace{\theta_1, \theta_2}\rbrace$ where  $\theta_1, \theta_2$ are left zero elements.
\item $S=G$ or $G^0$, where $G$ is a right group,
\end{enumerate}
where the first two structures are due to regular uniform monoids and the last two ones are due to regular uniform semigroups without identity element.
\ete

Since uniform semigroup are generalisations of right (subdirectly) irreducible semigroups, characterization of regular uniform semigroups in Theorem \ref{th3} can lead to characterization of regular right subdirectly irreducible semigroups. But it seems  we need a more sophisticated description of right congruences of regular semigroups.

Regarding the fact that completely simple and completely 0-simple semigroups involve classes namely right groups, left groups, right zero semigroups, left zero semigroups and rectangular bands in the next propositions we investigate uniform notion for such semigroups. We recall the Rees Theorem for completely (0-)simple semigroups, which exhibits a decomposition of such semigroups in terms of Rees matrix semigroups over (0-)groups with the regular sandwich matrixes (see \cite{How}).

As a result of Theorem \ref{th3} we have the following characterization of uniform completely (0-)simple semigroups.
\bco  \lb{co4} Let $S=\mathcal{M}^{0}[G;I,\Lambda;P]$ be a completely 0-simple semigroup. $S$ is right uniform if and only if $I$ is a singleton or equivalently, $S$ is a right 0-group.
\eco
\bpf Since $S$ has no left zero element other than the zero element, regarding the structure of regular uniform semigroups in Theorem \ref{th3}, $S$ is a 0-group or right 0-group which is equivalent to $I$ being a singleton.
\epf

\bco \lb{co6} Let $S=\mathcal{M}[G;I,\Lambda;P]$ be a completely simple semigroup. $S$ is right uniform if and only if $|I|=2, |\Lambda |=1$ and $G$ is a trivial group or $|I|=1$, equivalently, $S$ is a left zero semigroup with two elements or $S$ is a right group.
\eco
\bpf Since $S$ is simple and has no zero element, regarding Theorem \ref{th3}, $S$ is a right group or a left zero semigroup with two elements.
\epf
Note that Theorem \ref{th3} yields characterization of some known subclasses of regular semigroups which are uniform for instance {\it Clifford, completely regular, inverse, left and right inverse, band and orthodox} semigroups. Regarding Theorem \ref{th3}, Proposition \ref{pr2.6}, Corollaries  \ref{co9}, \ref{co8}, \ref{co7}, \ref{co2}, \ref{co4}, and  \ref{co6} in the next table we summaries characterizations of obtained classes of uniform semigroup in this paper.

\begin{table}[h]
\caption{\bf Characterizations of some classes of uniform semigroups}
\hspace*{-3cm}
\begin{tabular}{m{4cm}|p{8cm}|c}
     
   \centering    {\bf Class of uniform semigroup }                      & \centering {\bf Structure}                                                                                  &  {\bf Observation}                                   \\ \toprule
 \centering $S$ is regular or Orthodox or completely regular & $S$ has one of the structures in the statement of Theorem \ref{th3}                        & Theorem \ref{th3}~ and Corollary \ref{co9}                            \\ \hline
       \centering $S$ is right inverse                             & \centering S=$G$ or $G^0$ where $G$ is a group or a right group                                       & \multirow{2}{*}{\shortstack {Theorem \ref{th3} and \cite[Lemma 1.3.39]{kilp}  \\on the structure of right inverse \\ and left inverse semigroups}} \\ \cline{1-2}
      \centering  $S$ is left inverse                              & \centering $S$ has one of the structures ${\rm i),ii)}$ in the statement of Theorem \ref{th3}     &  \\ \hline
      \centering  $S$ is inverse                                   & \centering S=$G$ or $G^0$ where $G$ is a group                                                        & Theorem \ref{th3}                             \\ \hline
      \centering  $S$ is Clifford                                  &\centering S=$G$ or $G^0$ where $G$ is a group                                                        &  Theorem \ref{th3}                             \\ \hline
      \centering  $S$ is completely simple                         &\centering $S$ is a left zero semigroup with two elements or $S$ is a right group.                    & Corollary \ref{co6}                           \\ \hline
      \centering  $S$ is completely 0-simple                       &\centering $S$ is a right 0-group                                                                     & Corollary \ref{co4}                           \\ \hline
       \centering $S$ is band                                      & \centering $S=\{0,1\}$ or $S$ is a right zero semigroup or $S$ is a two elements left zero semigroup  & Theorem \ref{th3}                             \\ \hline
\centering $S$ is left simple & \centering $S$ is a left zero semigroup with two elements or $S$ is a group & Corollary \ref{co8} \\ \hline
\centering $S$ is left 0-simple & \centering $S$ is a 0-group & Corollary \ref{co7} \\ \hline
\centering $S$ is strongly right noetherian &  \centering $S$ is left cancellative, left nil or left subelementary & Proposition \ref{pr2.6} \\ \hline
\centering $S$ is finite & \centering  $S$ is right group, left nil or left subelementary & Corollary \ref{co2}
    \end{tabular}

\end{table}
\newpage

\end{document}